\documentclass[12pt, a4paper]{amsproc}
\usepackage{amsmath,amssymb,anysize,times,float,enumerate}
\usepackage{listings}

\usepackage{hyperref} 
\hypersetup{citecolor=red, linkcolor=blue, colorlinks=true}

\renewcommand\le{\leqslant}
\renewcommand\ge{\geqslant}

\newtheorem{thm}{Theorem}[section]
\newtheorem{lem}[thm]{Lemma}

\theoremstyle{definition}

\theoremstyle{remark}

\newcommand\Wr{{\rm \,wr\, }}

\DeclareMathOperator\PGL{{\rm PGL}}
\DeclareMathOperator\AGL{{\rm AGL}}

\DeclareMathOperator\Out{{\rm Out}}
\DeclareMathOperator\Sz{{\rm Sz}}

\title[Covering number of symmetric groups]{On the covering number of symmetric groups of even degree}

\author{Eric Swartz}
\address{ Department of Mathematics, College of William and Mary, P.O. Box 8795, Williamsburg, VA 23187-8795}
\email{easwartz@wm.edu}

\subjclass[2010]{Primary 20D06, 20D60, Secondary 20F99}

\keywords{symmetric groups, finite union of proper subgroups, minimal number of subgroups}

\begin{document}

\begin{abstract}
If a group $G$ is the union of proper subgroups $H_1, \dots, H_k$, we say that the collection $\{H_1, \dots H_k \}$ is a cover of $G$, and the size of a minimal cover (supposing one exists) is the covering number of $G$, denoted $\sigma(G)$.  Mar\'{o}ti showed that $\sigma(S_n) = 2^{n-1}$ for $n$ odd and sufficiently large, and he also gave asymptotic bounds for $n$ even.  In this paper, we determine the exact value of $\sigma(S_n)$ when $n$ is divisible by $6$. 
\end{abstract}

\maketitle

\section{Introduction}

Let $G$ be a group and ${\mathcal A} = \{A_i : 1\leqslant i\leqslant n\}$ be a collection of proper subgroups of $G$.  If $G = {\bigcup\limits_{i=1}^n}A_i$ (as a set theoretic union),
then ${\mathcal A}$ is called a \textit{cover} of $G$.  A cover of size $n$ is said to be \textit{minimal} if no cover of $G$ has fewer than $n$ members.  The size of a minimal covering of $G$, supposing one exists, is called the \textit{covering number} and is denoted by $\sigma(G)$.

The concept of a cover is only well-defined if $G$ is not a cyclic group. Indeed, if $G$ is a cyclic group, then no generator of $G$ is contained in a proper subgroup, and so $G$ has no cover.  On the other hand, if $G$ is not cyclic, then one could take all cyclic subgroups as a cover.  Moreover, when considering the covering number of a group, it is obvious that the subgroups used in a cover can be restricted to maximal subgroups.

Note that we can consider covers of either finite or infinite groups.  Indeed, B.H. Neumann \cite{N} showed that a group is the union of finitely many proper subgroups if and only if it has a finite noncyclic homomorphic image.  In this paper we will restrict ourselves to finite groups.

The covering number $\sigma(G)$ of a finite group $G$ provides an upper bound for $\omega(G)$, which is defined to be the largest integer $m$ such that there exists a subset $S$ of $G$ of size $m$ with the property that any two distinct elements of $S$ generate $G$.  There has been a great interest in this topic in recent years (see \cite{B, BEGHM1, BEGHM2, HM}), especially with regards to the application of $\sigma(G)$ as an upper bound for $\omega(G)$.  For a survey regarding the covering number and related problems, see \cite{Se}. 

In \cite{C}, Cohn conjectures that the covering number of any (noncyclic) solvable group has the form $p^{\alpha}+1$, where $p$ is a prime and $\alpha$ is a positive integer.  In \cite{T}, Tomkinson confirms this conjecture, showing that the covering number of any (noncyclic) solvable group has the form $|H/K| + 1$, where $H/K$ is the smallest chief factor of $G$ having more than one complement in $G$.

Furthermore, Tomkinson suggests that it might be of interest to determine the covering number of simple groups.  Along these lines, the covering number for 2-dimensional linear groups was determined by Bryce, Fedri, and Serena in \cite{BFS}, and the covering number for the Suzuki groups $\Sz(q)$ was determined by Lucido in \cite{Lu}.  Holmes applied innovative combinatorial and computational techniques using GAP \cite{Ga} in \cite{H} to calculate the covering number of many sporadic simple groups.  

Naturally, there has been great interest in symmetric and alternating groups.   Mar\'{o}ti made great progress on both in \cite{M}.  For alternating groups, Mar\'{o}ti showed that $\sigma(A_n) \geqslant 2^{n-2}$ for $n \neq 7,9$ with equality if and only if $n \equiv 2 \pmod 4$ and further proved that $\sigma(A_7) \leqslant 31$ and $\sigma(A_9) \geqslant 80.$  Small values of $n$ have been resolved elsewhere.  Cohn \cite{C} showed that $\sigma(A_5) = 10$; Kappe and Redden \cite{KR} showed that $\sigma(A_7) = 31$, $\sigma(A_8) = 71$, and $127 \leqslant \sigma(A_9) \leqslant 157$; and recently Epstein, Magliveras, and Nikolova-Popova \cite{EMN} showed that $\sigma(A_9) = 157$ and $\sigma(A_{11}) = 2751$.  

For symmetric groups, Mar\'{o}ti showed for $n$ odd that $\sigma(S_n) = 2^{n-1}$ unless $n = 9$ and showed for $n$ even that $\sigma(S_n) \sim \frac{1}{2} {\binom{n}{n/2}}.$  We note that $\sigma(S_4) = 4$ by \cite{T} and $\sigma(S_6) = 13$ by \cite{AAS}.  Kappe, Nikolova-Popova, and the author showed in \cite{KNS} that $\sigma(S_8) = 64$, $\sigma(S_9) = 256$ (confirming that $\sigma(S_n) = 2^{n-1}$ for all odd $n$), $\sigma(S_{10}) = 221,$ and $\sigma(S_{12}) = 761$, establishing that the upper bound of $761$ for $\sigma(S_{12})$ Mar\'{o}ti gave in \cite{M} was in fact the exact value.    

It is obvious that computational methods can only be taken so far with symmetric groups of even degree, and the goal of this paper is analyze these groups in the same spirit as \cite{M}.  We will prove the following theorem:

\begin{thm}
\label{thm:6n}
Let $n \equiv 0 \pmod {6}$, $n \geqslant 24$. If $\sigma(S_n)$ denotes the subgroup covering number of $S_n$, then $\sigma(S_n) = \frac{1}{2} {\binom{n}{n/2}} + \sum\limits_{i=0}^{n/3 - 1} {\binom{n}{i}}.$  Moreover, $\sigma(S_{18}) = 36772 = \frac{1}{2} {\binom{18}{9}} + \sum\limits_{0 \leqslant i \leqslant 5, i \neq 2} {\binom{18}{i}}.$  In each of these cases, the minimal cover using only maximal subgroups is unique.  
\end{thm}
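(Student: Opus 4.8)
The plan is to establish matching upper and lower bounds realised by the explicit cover
\[
\mathcal{C} \;=\; \{A_n\} \;\cup\; \{\, S_i \times S_{n-i} : 1 \le i \le n/3 - 1 \,\} \;\cup\; \{\, S_{n/2} \Wr S_2 \,\},
\]
where the last family ranges over all $\tfrac12\binom{n}{n/2}$ partitions of $\{1,\dots,n\}$ into two halves and each $S_i \times S_{n-i}$ ranges over the $\binom{n}{i}$ stabilizers of an $i$-subset. Absorbing $A_n$ as the $i=0$ term, $|\mathcal C| = \tfrac12\binom{n}{n/2} + \sum_{i=0}^{n/3-1}\binom{n}{i}$, matching the claimed value.

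First I would verify that $\mathcal C$ is a cover by a case analysis on the cycle lengths of an arbitrary $g \in S_n$. If $g$ has a cycle of length $i$ with $1 \le i \le n/3 - 1$, its support is a $g$-invariant $i$-set, so $g$ lies in the corresponding $S_i \times S_{n-i}$. Otherwise every cycle has length $\ge n/3$, so $g$ has at most three cycles, and I treat three subcases: (i) a single $n$-cycle, which preserves the unique index-two partition arising from $\langle g^2\rangle$ and hence lies in a half-half subgroup; (ii) two cycles of lengths $a,b$ with $a+b=n$, which have equal parity, so if both are even then two-colouring each cycle exhibits $g$ in a half-half subgroup, while if both are odd then $\operatorname{sgn}(g)=(-1)^{n-2}=1$ and $g \in A_n$; (iii) three cycles, necessarily each of length exactly $n/3$, where $\operatorname{sgn}(g)=(-1)^{n-3}=-1$ forces $g \notin A_n$, but since $6\mid n$ the length $n/3$ is even, and two-colouring each cycle again places $g$ in a half-half subgroup. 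Subcase (iii) is precisely where the hypothesis $6 \mid n$ is indispensable.

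For the lower bound I would first reduce to covers consisting of maximal subgroups, then classify these via the O'Nan--Scott dichotomy into intransitive, imprimitive, and primitive types. The crucial reduction is to eliminate every primitive maximal subgroup other than $A_n$: invoking standard upper bounds on the order of a primitive group of degree $n$, which are far smaller than $n!/\binom{n}{n/2}$, a counting argument shows that such a subgroup covers too few elements ever to decrease the total, so no minimal cover uses one. The forcing argument then pins down each member of $\mathcal C$. The $n$-cycles force the half-half subgroups: each $n$-cycle preserves a unique index-two partition, hence lies in a unique $S_{n/2}\Wr S_2$, in no intransitive subgroup (being transitive) and not in $A_n$ (being odd); after ruling out the remaining imprimitive subgroups $S_d\Wr S_{n/d}$ with $d \ne n/2$ as inefficient, all $\tfrac12\binom{n}{n/2}$ half-half subgroups are required. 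Dually, for each $i$-set with $1 \le i \le n/3 - 1$ I would exhibit odd permutations whose only invariant proper subsets are that $i$-set and its complement and which lie in no half-half subgroup, forcing each $S_i \times S_{n-i}$, while an even permutation with all cycle lengths $\ge n/3$ forces $A_n$. Since each forcing element pins down a single subgroup, this simultaneously yields the exact value and the uniqueness of the minimal maximal-subgroup cover.

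The hard part will be the accounting at the threshold $i = n/3$: one must show both that the intransitive subgroups with $i \ge n/3$ are genuinely redundant --- which is exactly the content of subcases (i)--(iii) above --- and that no substitution of a smaller number of imprimitive or primitive subgroups can undercut the count, so the primitive-order bounds must be strong enough to close this optimization. Finally, the degree $n=18$ requires separate treatment: there $n/3 = 6$ is small enough that the elements ostensibly needing $S_2 \times S_{16}$ admit an alternative covering, so the $i = 2$ term drops out; I would confirm this boundary behaviour by a direct, partly computational, analysis, which also clarifies why the clean formula is stated only for $n \ge 24$.
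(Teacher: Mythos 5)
Your upper-bound half is fine and is essentially the paper's own argument: the same cover $\mathcal{C}$, verified by the same case analysis on cycle lengths, with the case of three cycles of length exactly $n/3$ being where $6 \mid n$ is used. The lower bound, however, contains a fatal gap. You propose to force each $S_i \times S_{n-i}$ ($1 \le i \le n/3-1$) into the cover by exhibiting ``odd permutations whose only invariant proper subsets are that $i$-set and its complement.'' No such permutations exist: the invariant subsets of a permutation are exactly the unions of supports of its cycles, so a permutation whose only invariant proper nonempty subsets are one $i$-set and its complement must have exactly two cycles, of lengths $i$ and $n-i$; such an element is a product of $(i-1)+(n-i-1) = n-2$ transpositions, which is \emph{even} since $n$ is even, contradicting oddness. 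Consequently every element you could use as a witness for $S_i \times S_{n-i}$ (odd, no fixed points, not in a half-half subgroup) has at least three cycles, say of lengths $i, r, s$, and therefore lies in at least three intransitive maximal subgroups, namely $S_i \times S_{n-i}$, $S_r \times S_{n-r}$ and $S_s \times S_{n-s}$. No witness element pins down a single subgroup, so your forcing argument, and with it the claimed uniqueness, collapses at exactly this step.

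This non-uniqueness of the covering subgroup is the whole difficulty the paper is organized around. Its proof chooses witness sets $\Pi_i$ of prescribed cycle structures $(i,r,s)$ with $r,s \ge n/3$, shows the subgroups of the proposed cover $\mathcal{H}$ partition $\Pi = \bigcup \Pi_i$, and then compares \emph{efficiency}: every maximal subgroup outside $\mathcal{H}$ must be shown to meet $\Pi$ in fewer elements than every member of $\mathcal{H}$ it might replace. Even this fails at the threshold you flag: the subgroup $M \cong S_{n/3+1} \times S_{2n/3-1}$ meets $\Pi$ in \emph{more} elements than $H \cong S_{n/3-1} \times S_{2n/3+1} \in \mathcal{H}$ (it intersects four of the classes $\Pi_{n/3-4},\dots,\Pi_{n/3-1}$), so a greedy comparison cannot finish either. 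The paper needs a further structural lemma --- any replacing collection of copies of $S_{n/3+1}\times S_{2n/3-1}$ forces the removed collection to meet all four classes $\mathcal{H}_{n/3-4},\dots,\mathcal{H}_{n/3-1}$ --- followed by a global count showing such a swap never saves subgroups. Your proposal names this ``hard part'' but supplies no mechanism for it, and the mechanism cannot be the one you describe. A smaller omission: besides $n=18$, the cases $n = 24$ and $n = 30$ also need separate treatment (the generic counting requires $n > 30$; e.g.\ for $n=30$ the subgroups $S_{11} \times S_{19}$ additionally contain elements of cycle type $(2,11,17)$), whereas you only set aside $n = 18$.
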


The following notation will be used throughout the paper.  Given an element $g \in S_n$, we say that the permutation $g$ has cycle structure $(n_1,...,n_k)$ with $n_1 \le n_2 \le \dots \le n_k$ if $g$, when written as the product of disjoint cycles, contains cycles of length $n_i$ for $1 \leqslant i \leqslant k$, where $\sum\limits_{i=1}^k n_i = n$.  For instance, the permutation $(1\text{ }2)(3\text{ }4)(5\text{ }6\text{ }7\text{ }8\text{ }9) \in S_9$ has cycle structure $(2,2,5)$, whereas the permutation $(1\text{ }2)(3\text{ }4)(5\text{ }6\text{ }7\text{ }8\text{ }9) \in S_{10}$ has cycle structure $(1,2,2,5)$. 

This paper is organized as follows: in Section \ref{sect:sub}, we provide details about maximal subgroups of symmetric groups; in Section \ref{sect:lemma}, we prove a lemma that provides a sufficient condition for a cover consisting of entire conjugacy classes of maximal subgroups to be minimal; in Section \ref{sect:small}, we apply this lemma to the groups $S_{18}$ and $S_{24}$ to establish their covering numbers; and, finally, in Section \ref{sect:large}, we apply the lemma to establish the covering number of $S_n$, where $n \ge 30$ and $n \equiv 0 \pmod 6$. 

\section{Subgroups of symmetric groups}
\label{sect:sub}

The maximal subgroups of the symmetric group $S_n$ are characterized by the O'Nan-Scott Theorem, which may be stated as follows:

\begin{thm}[{\cite{Sc}}]
\label{thm:onan}
Let $H$ be a maximal subgroup of of $S_n$.  Then $H$ is isomorphic to one of the following:
\begin{itemize}
 \item[(i)] $S_k \times S_\ell$, where $k + \ell = n$;
 \item[(ii)] $S_k \Wr S_\ell$, where $k\ell = n$;
 \item[(iii)] $S_k \Wr S_\ell$, where $k^\ell = n$ and $k > 2$;
 \item[(iv)] $\AGL(d,p)$, where $p^d = n$;
 \item[(v)] $T^k.(\Out(T) \times S_k)$, where $T$ is a nonabelian simple group and $|T|^{k-1} = n$;
 \item[(vi)] an almost simple group.
\end{itemize}
\end{thm}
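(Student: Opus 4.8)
The plan is to classify a maximal subgroup $H$ of $S_n$ by how it acts on the point set $\Omega = \{1, \dots, n\}$, using the standard trichotomy intransitive / transitive-imprimitive / primitive. The first two cases are elementary and yield (1) and (2); the primitive case, handled by the O'Nan--Scott analysis of the socle, is where the real content lies and produces (3)--(6). Throughout, the guiding principle is that $H$ is contained in the full stabiliser of whatever combinatorial structure it preserves, and maximality then forces equality.

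First I would dispose of the intransitive case. If $H$ has an orbit $\Delta \subsetneq \Omega$ with $|\Delta| = k$, then $H$ is contained in the setwise stabiliser $S_\Delta \times S_{\Omega \setminus \Delta} \cong S_k \times S_{n-k}$, a proper subgroup for $1 \le k \le n-1$; since $H$ is maximal, $H = S_k \times S_{n-k}$, giving (1) with $\ell = n-k$. Next, suppose $H$ is transitive but imprimitive, preserving a nontrivial block system with $\ell$ blocks each of size $k$, so that $k\ell = n$ and $k, \ell > 1$. Then $H$ lies in the full stabiliser of that block system, which in its imprimitive action is $S_k \Wr S_\ell$; this is proper, and maximality forces equality, giving (2).

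The heart of the argument is the primitive case. Here I would invoke the structure of the socle $N = \operatorname{soc}(H)$: as a nontrivial normal subgroup of a primitive group it is transitive, and being a product of the minimal normal subgroups it has the form $T^k$ for a simple group $T$ and some $k \ge 1$. The classification then proceeds by the isomorphism type of $T$ and the action of $N$. If $T \cong \mathbb{Z}_p$ is abelian, then $N$ is a regular elementary abelian $p$-group, $n = p^d$ with $d = k$, the set $\Omega$ may be identified with $\mathbb{F}_p^d$, and $H$ embeds in the affine group, so maximality gives $H = \AGL(d,p)$, namely (4). If $k = 1$ and $T$ is nonabelian simple, then $T \le H \le \operatorname{Aut}(T)$ and $H$ is almost simple, namely (6). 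If $k \ge 2$ and $T$ is nonabelian simple, the point stabiliser $N_\omega$ determines the action: when $N_\omega$ is a full diagonal subgroup isomorphic to $T$ one obtains the simple diagonal action on $n = |T|^{k-1}$ points, and maximality gives $H = T^k.(\Out(T) \times S_k)$, namely (5); when instead $H$ preserves a Cartesian product structure $\Omega = \Gamma^\ell$ with primitive components, $H$ embeds in a wreath product acting by the product action, and the maximal instance with $S_k$ acting naturally on each factor is $S_k \Wr S_\ell$ with $k^\ell = n$ and $k > 2$, namely (3).

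The main obstacle is twofold. First, one must prove the socle dichotomy for primitive groups: that $N \cong T^k$, that $N_\omega$ is a subdirect subgroup of $N$, and that its possibilities — trivial (regular), a diagonal, a product of these, or an almost simple stabiliser — are exactly those above. This is the technical core; it requires analysing how $H$ permutes the simple factors of $N$ and controlling the centraliser $C_{S_n}(N)$, which for a transitive $N$ is semiregular. Second, and more delicately, one must verify \emph{maximality}: the O'Nan--Scott types not appearing in the statement (compound diagonal and twisted wreath, together with product actions whose component is itself of diagonal type) must be shown to embed in subgroups already on the list, so that they yield no new maximal subgroups, while conversely each group in (3)--(6) must be shown to lie in no larger proper subgroup. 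Establishing these containments and non-containments ultimately rests on detailed information about the subgroups of almost simple groups, and hence on the classification of finite simple groups.
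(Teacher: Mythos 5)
This statement is quoted in the paper as a known result with a citation to \cite{Sc}; the paper supplies no proof of its own, so there is nothing internal to compare your argument against, and the right benchmark is the standard literature proof (Scott; Liebeck--Praeger--Saxl) --- which is essentially what you have sketched. Your trichotomy intransitive/imprimitive/primitive, the ``full stabiliser of the preserved structure plus maximality forces equality'' principle, and the socle analysis in the primitive case are all the standard route, and your outline is correct as far as it goes. Two remarks on calibration. First, your closing paragraph overstates what is needed: the theorem as stated only asserts that a \emph{maximal} $H$ is isomorphic to something on the list, not that each listed group is maximal. For that direction one needs only the CFSG-free O'Nan--Scott socle dichotomy together with elementary containments --- twisted wreath and compound diagonal type groups embed in product-action wreath products, and any product-action group on $\Gamma^\ell$ embeds in $\operatorname{Sym}(\Gamma) \Wr S_\ell$, i.e., in type (3); the alternating group $A_n$ is absorbed into (6). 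The converse problem --- deciding which groups on the list actually are maximal, which is where detailed knowledge of almost simple groups and hence the classification of finite simple groups genuinely enters --- is the content of Liebeck--Praeger--Saxl and is not claimed by this statement (nor needed anywhere in the paper, which only uses the list as an upper bound on what maximal subgroups can look like). Second, your sketch leaves the technical core (transitivity of normal subgroups of primitive groups, the structure of $N_\omega$ as a subdirect/diagonal subgroup, semiregularity of the centraliser) as acknowledged black boxes; that is appropriate for reproving a cited classification theorem, but it means your text is an outline of the known proof rather than a self-contained one.
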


For the purposes of this paper, with the exception of singling out the alternating group $A_n$, there is no need to distinguish between subgroups that fall under (iii) -- (vi) of Theorem \ref{thm:onan}.  Identifying $S_n$ with its natural action on $\{1, \dots, n\}$, we will instead divide the maximal subgroups of $S_n$ into the following four classes:  

\begin{itemize}
 \item[(1)] The \textit{alternating group} $A_n$.
 \item[(2)] \textit{Intransitive groups}, i.e., those groups isomorphic to $S_k \times S_\ell$, $k + \ell = n$, which stabilize a decomposition of the set $\{1,\dots, n\}$ into one set of size $k$ and one set of size $\ell$.
 \item[(3)] \textit{Imprimitive groups}, i.e., those groups isomorphic to $S_k \Wr S_\ell$, where $k\ell = n$, which stabilize a decomposition of the set $\{1,\dots, n\}$ into $\ell$ sets of size $k$.  Note that, unlike the intransitive groups, imprimitive groups are transitive on the set $\{1,...,n\}$.
 \item[(4)] \textit{Primitive groups}, i.e., those groups that act primitively on $\{1, \dots, n\}$ and are not the alternating group $A_n$. These are the groups that are not $A_n$ and fall under (iii)--(vi) of Theorem \ref{thm:onan}.
\end{itemize}

For large values of $n$, the primitive groups (that are not $A_n$) have orders that are very small compared to the orders of the maximal subgroups in classes (1) -- (3):  

\begin{lem}[{\cite[Corollary 1.2]{M2}}]
\label{lem:primbound} 
If $G$ is a primitive subgroup of $S_n$ that is not the alternating group $A_n$ and $n > 24$, then $|G| < 2^n$.
\end{lem}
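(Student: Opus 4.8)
The plan is to reduce the statement to the classification of finite primitive permutation groups, in the sharp form that follows from the classification of finite simple groups (and that Mar\'{o}ti establishes in \cite{M2}). That structure theorem asserts that a primitive group $G$ of degree $n$ satisfies one of the following: (i) $G$ is a subgroup of $S_m \Wr S_r$ in its product action on $\binom{m}{k}^r$ points and contains the socle $(A_m)^r$, where $S_m$ acts on $k$-element subsets, so that $n = \binom{m}{k}^r$; (ii) $G$ is one of the four $4$-transitive Mathieu groups $M_{11}, M_{12}, M_{23}, M_{24}$; or (iii) $|G| \le n\prod_{i=0}^{\lfloor \log_2 n\rfloor-1}(n-2^i)$. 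Since the Mathieu groups all have degree at most $24$, case (ii) cannot occur once $n > 24$, so the whole argument reduces to bounding $|G|$ by $2^n$ in cases (i) and (iii).

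For case (iii) I would take logarithms of the explicit bound and show $\log_2|G| < n$. Writing $t = \lfloor\log_2 n\rfloor$, one has $\log_2|G| \le \log_2 n + \sum_{i=0}^{t-1}\log_2(n-2^i)$, a sum of $t+1$ terms each at most $\log_2 n$; for larger $n$ the resulting polylogarithmic quantity is comfortably below the linear quantity $n$, while for the few values just above $24$ one must keep the factors $(n-2^i)$, which are strictly smaller than $n$, rather than the cruder estimate $n^{1+\log_2 n}$. A finite computation then disposes of those boundary cases.

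The genuinely delicate case is (i), which I expect to be the main obstacle. Here $|G| \le (m!)^r\, r!$ while $n = \binom{m}{k}^r$ with $1 \le k \le m/2$, and the only configuration producing $A_n$ (or $S_n$) is $k = 1, r = 1$, excluded by hypothesis. The goal is therefore
\[
(m!)^r\, r! \;<\; 2^{\binom{m}{k}^r}
\]
for all remaining admissible triples $(m,k,r)$ with $\binom{m}{k}^r > 24$. Taking logarithms reduces this to comparing $r\log_2(m!) + \log_2(r!)$ against $\binom{m}{k}^r$, and the critical regime is where $n$ is smallest relative to $|G|$: the natural action $k = 1$ (which now forces $r \ge 2$) and the action on $2$-subsets $k = 2, r = 1$. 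I would isolate these two families, dispatch the finitely many boundary values (for $k=2, r=1$ the constraint $\binom{m}{2}>24$ forces $m \ge 8$, where $m! < 2^{\binom{m}{2}}$ holds directly) by direct computation, and then exploit the far faster growth of $\binom{m}{k}^r$ to close the general case by a crude size estimate or induction on $r$.

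The difficulty is thus twofold. First, the convenient uniform bound $n^{1+\log_2 n}$ is not quite below $2^n$ near $n = 25$, so one must retain the sharper explicit product bound and treat the boundary carefully; this is precisely why the hypothesis reads $n > 24$ rather than something smaller. Second, one must correctly identify, among all product-action configurations $(m,k,r)$, exactly those that yield a group containing $A_n$ and exclude them, so that the inequality is claimed only for the genuinely small primitive groups.
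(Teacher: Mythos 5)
The paper offers no proof of this lemma at all: it is quoted verbatim from Mar\'{o}ti \cite{M2}, and your plan is essentially Mar\'{o}ti's own derivation of that corollary from his structure theorem (the trichotomy into product-action subgroups of $S_m \Wr S_r$ containing $(A_m)^r$, the four Mathieu groups, and the bound $|G| \le n\prod_{i=0}^{\lfloor\log_2 n\rfloor-1}(n-2^i)$), so your proposal is sound and matches the approach of the cited source, including the correct identification of the tight configurations $k=1,r\ge 2$ and $k=2,r=1$ and the need to keep the sharp product rather than $n^{1+\log_2 n}$ near the boundary. One small correction to an aside: the hypothesis $n>24$ is forced by $M_{24}$, whose order $244823040$ exceeds $2^{24}$, and not by the failure of the crude estimate at $n=25,26,27$ --- at $n=24$ the case (iii) product bound gives $24\cdot 23\cdot 22\cdot 20\cdot 16 = 3886080 < 2^{24}$, so case (iii) is not the binding constraint there.
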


\section{A sufficient condition for a cover to be minimal}
\label{sect:lemma}

Let $G$ be a finite group whose conjugacy classes maximal subgroups are indexed by a set $I_G$.  For $i \in I_G$, let $\mathcal{M}_i$ denote a conjugacy class of maximal subgroups.  Let $\Pi$ be a union of conjugacy classes of elements of $G$.  Assume that, for some $I \subseteq I_G$, $\mathcal{C} = \bigcup\limits_{i \in I} \mathcal{M}_i$ is a cover of $\Pi$, and assume that the elements of $\Pi$ are partitioned among the subgroups in $\mathcal{C}$.  Denote by $\Pi_i$ the set of elements covered the conjugacy class $\mathcal{M}_i$.  Note that, since $\Pi$ is a union of conjugacy classes of elements of $G$, for each conjugacy class $\mathcal{M}$ of maximal subgroups of $G$ and $M,M' \in \mathcal{M}$, we have $|M \cap \Pi| = |M' \cap \Pi|$.  

For a maximal subgroup $M \not\in \mathcal{C}$, we define $$d(M) = \sum\limits_{i \in I}\frac{|M \cap \Pi_i|}{|M_i \cap \Pi_i|},$$ where $M_i$ is a maximal subgroup in $\mathcal{M}_i$.  We will sometimes abuse notation slightly and write the isomorphism type of a group instead of a specific subgroup; for instance, $d(S_{13} \times S_{17})$ would denote $d(M)$, where $M$ is a maximal subgroup isomorphic to $S_{13} \times S_{17}$ in $S_{30}$.  The following lemma provides a sufficient condition for the cover $\mathcal{C}$ to be a minimal cover of the elements of $\Pi$.

\begin{lem}
\label{lem:keylemma}
Let $\Pi$ be a union of conjugacy classes of elements of $G$, and let $\mathcal{C} = \bigcup\limits_{i \in I} \mathcal{M}_i$ be a cover of $\Pi$ such that the elements of $\Pi$ are partitioned among the subgroups in $\mathcal{C}$ and that each subgroup in $\mathcal{C}$ contains elements of $\Pi$.  If $d(M) < 1$ for all maximal subgroups $M \not\in \mathcal{C}$, then $\mathcal{C}$ is a minimal cover of the elements of $\Pi$.  Moreover, $\mathcal{C}$ is the unique minimal cover of the elements of $\Pi$ that uses only maximal subgroups.
\end{lem}

\begin{proof}
Let $\mathcal{C}$ and $\Pi$ be as in the statement of the lemma, and assume that $d(M) < 1$ for all maximal subgroups not in $\mathcal{C}$.  Let $\mathcal{B}$ be another cover of the elements of $\Pi$.  Let $\mathcal{C}' = \mathcal{C} \backslash (\mathcal{C} \cap \mathcal{B})$ and $\mathcal{B}' = \mathcal{B} \backslash (\mathcal{C} \cap \mathcal{B})$.  The collection $\mathcal{C}'$ consists only of subgroups from classes $\mathcal{M}_i$, where $i \in I$, and we let $c_i$ be the number of subgroups from $\mathcal{M}_i$ in $\mathcal{C}'$.  Similarly, the collection $\mathcal{B}'$ consists only of subgroups from classes $\mathcal{M}_j$, where $j \not\in I$, and we let $b_j$ be the number of subgroups from $\mathcal{M}_j$ in $\mathcal{B}'$.  Note that, since $\mathcal{B}$ is a different cover, for some $j \not\in I$, we have $b_j > 0$.  

By removing $c_i$ subgroups from class $\mathcal{M}_i$ from $\mathcal{C}$, the new subgroups in $\mathcal{B}'$ must cover the elements of $\Pi$ that were in these subgroups.  Hence, for all $i \in I$,  if $M_k$ denotes a subgroup in class $\mathcal{M}_k$ for each $k$, $$c_i |M_i \cap \Pi_i| \le \sum\limits_{j \not\in I} b_j |M_j \cap \Pi_i|,$$ which in turn implies that, for all $i \in I$, $$c_i \le \sum\limits_{j \not\in I} b_j \frac{|M_j \cap \Pi_i|}{|M_i \cap \Pi_i|}.$$  This means that:
\begin{align*}
|\mathcal{C}'| &= \sum\limits_{i \in I} c_i\\
&\le \sum\limits_{i\in I}\sum\limits_{j \not\in I} b_j \frac{|M_j \cap \Pi_i|}{|M_i \cap \Pi_i|}\\
&= \sum\limits_{j \not\in I}\sum\limits_{i \in I} b_j \frac{|M_j \cap \Pi_i|}{|M_i \cap \Pi_i|}\\
&= \sum\limits_{j \not\in I}\left(\sum\limits_{i \in I} \frac{|M_j \cap \Pi_i|}{|M_i \cap \Pi_i|}  \right)b_j\\
&= \sum\limits_{j \not\in I} d(M_j)b_j\\
&< \sum\limits_{j \not\in I} b_j\\
&= |\mathcal{B}'|,
\end{align*}
which shows that $$|\mathcal{C}| = |\mathcal{C}'| + |\mathcal{C} \cap \mathcal{B}| < |\mathcal{C}'| + |\mathcal{C} \cap \mathcal{B}| = |\mathcal{B}|.$$ Hence, any other cover of the elements of $\Pi$ using only maximal subgroups has more subgroups than $\mathcal{C}$.  Therefore, $\mathcal{C}$ is a minimal cover of the elements of $\Pi$, and $\mathcal{C}$ is the unique minimal cover of the elements of $\Pi$ that uses only maximal subgroups. 
\end{proof}

Sometimes, an alternative formulation is easier to apply.  Let $\mathcal{M}_j$ be a class of maximal subgroups of $G$, $j \not\in I$.  For $i \in I$, we define $m_j(i)$ to be the number of subgroups in class $\mathcal{M}_j$ containing a particular element $g_i \in \Pi_i$.  For instance, if the elements of $\Pi_i$ of partitioned among the subgroups of $\mathcal{M}_j$, then $m_j(i) = 1$, whereas if each element of $\Pi_i$ is contained in exactly three subgroups of $\mathcal{M}_j$, then $m_j(i) = 3$.  We also define $$\epsilon_j(i) :=  \begin{cases}
1, & \text{ if } M_j \cap \Pi_i \neq \varnothing \text{ for all } M_j \in \mathcal{M}_j,\\
0, & \text{ if } M_j \cap \Pi_i = \varnothing \text{ for all } M_j \in \mathcal{M}_j.\\                                                                                                                                                                                                                                                                                                                                                                                                                                                                                                                                                                                                                                                                                                                                                             \end{cases}$$

\begin{lem}
\label{lem:altd}
Let $\Pi$ be a union of conjugacy classes of elements of $G$, and let $\mathcal{C} = \bigcup\limits_{i \in I} \mathcal{M}_i$ be a cover of $\Pi$ such that the elements of $\Pi$ are partitioned among the subgroups in $\mathcal{C}$ and that each subgroup in $\mathcal{C}$ contains elements of $\Pi$.  For any class $\mathcal{M}_j$ of maximal subgroups of $G$, $j \not\in I$, if $M_j \in \mathcal{M}_j$ and $M_i \in \mathcal{M}_i$, then $d(M_j) =  |M_j| \sum\limits_{i \in I} \frac{\epsilon_j(i)m_j(i)}{|M_i|}.$ 
\end{lem}

\begin{proof}
Since the elements of $\Pi_i$ are partitioned in $\mathcal{M}_i$, $$|M_i \cap \Pi_i| = \frac{|\Pi_i|}{|G:M_i|} = \frac{|\Pi_i| |M_i|}{|G|}.$$  
If $\epsilon_j(i) = 1$, then $$|M_j \cap \Pi_i| = \frac{|\Pi_i|}{|G:M_j|/m_j(i)} = \frac{|\Pi_i|m_j(i)|M_j|}{|G|} = \frac{|\Pi_i|\epsilon_j(i)m_j(i)|M_j|}{|G|}.$$  Similarly, if $\epsilon_j(i) = 0$, then $$|M_j \cap \Pi_i| = 0 = \epsilon_j(i) = \frac{|\Pi_i|\epsilon_j(i)m_j(i)|M_j|}{|G|}.$$  In any case, $$\frac{|M_j \cap \Pi_i|}{|M_i \cap \Pi_i|} = \frac{\left( \frac{|\Pi_i|\epsilon_j(i)m_j(i)|M_j|}{|G|}\right)}{\left( \frac{|\Pi_i| |M_i|}{|G|}\right)} = \frac{\epsilon_j(i)m_j(i)|M_j|}{|M_i|}.$$  The result follows.
\end{proof}

\section{Small values of n}
\label{sect:small}

In this section, we will exhibit the usefulness of Lemma \ref{lem:keylemma} by applying it to the cases $n = 18$ and $n = 24$.  This provides a concrete example of the lemma in action while simultaneously examining the two values of $n$ that are divisible by $6$ for which $\sigma(S_n)$ is unknown but are too small to be treated using the techniques in the next section. 

\subsection{The symmetric group \texorpdfstring{\boldmath{$S_{18}$}}{}}

We begin with the symmetric group $S_{18}$.  We first define the collection $\mathcal{C}_{18}$ to be the set of all maximal subgroups of $S_{18}$ isomorphic to one of $S_9 \Wr S_2$, $A_{18}$, $(S_1 \times) S_{17}$, $S_3 \times S_{15}$, $S_4 \times S_{14}$, or $S_5 \times S_{13}$.

\begin{lem}
\label{lem:18cover}
The collection $\mathcal{C}_{18}$ is a cover of the elements of $S_{18}$. 
\end{lem}

\begin{proof}
We identify $S_{18}$ with its natural action on the set $\{1,\dots, 18\}$.  We begin by noting that every $18$-cycle is contained in a subgroup isomorphic to $S_9 \Wr S_2$.  If $g \in S_{18}$ is a permutation that fixes an element of $\{1,\dots, 18\}$, then $g$ is contained in a subgroup isomorphic to $S_{17}$.  Moreover, any element with cycle structure $(i, 18- i)$, where $1 \le i \le 9$, is contained in $A_{18}$.  This means that any element $g$ not covered by a subgroup in $\mathcal{C}$ must have cycle structure consisting of at least three disjoint cycles, and $g$ cannot fix any element in the set $\{1,\dots, 18\}$.  If any one of the disjoint cycles of $g$ has length $3$, $4$, or $5$, then $g$ is contained in $S_3 \times S_{15}$, $S_4 \times S_{14}$, or $S_5 \times S_{13}$, respectively.  If all of the cycles of $g$ have length at least $6$, then the cycle structure of $g$ is $(6,6,6)$; any element with cycle structure $(6,6,6)$ stabilizes a decomposition of $\{1,\dots, 18\}$ into two subsets of size nine, and hence any element with cycle structure $(6,6,6)$ is contained in a subgroup isomorphic to $S_9 \Wr S_2$.  This implies that any element $g$ not covered by $\mathcal{C}$ has cycle structure $(2, i_1, i_2, \dots, i_k)$, where $\sum\limits_{j=1}^k i_j = 16$.  If some $i_j = 2$, then $g$ is contained in some subgroup isomorphic to $S_4 \times S_{14}$, so we may assume that each $i_j$ is at least $6$.  This implies that $k = 2$ and that $g$ has cycle structure one of $(2,6,10)$, $(2,7,9)$, or $(2,8,8)$.  However, all elements with one of these cycle structures stabilize a decomposition of $\{1,\dots, 18\}$ into two subsets of size $9$ and are contained in a subgroup isomorphic to $S_9 \Wr S_2$.  Therefore, $\mathcal{C}_{18}$ is a cover of $S_{18}$.
\end{proof}

We now need to show that $\mathcal{C}$ is in fact a minimal cover.  We define $\Pi$ to be the set of all elements of $S_{18}$ with cycle structure one of $(18)$, $(7,11)$, $(1, 7,10)$, $(3,7,8)$, $(4,7,7)$, or $(5,6,7)$.  Note that these elements are partitioned among the subgroups in $\mathcal{C}$.  We index the classes of maximal subgroups of $\mathcal{C}$ as follows: we let the subgroups isomorphic to $S_9 \Wr S_2$ be $\mathcal{M}_{-1}$, the subgroup isomorphic to $A_{18}$ be $\mathcal{M}_0$, and the subgroups isomorphic to $S_i \times S_{18-i}$ be $\mathcal{M}_i$ for $i = 1, 3,4,5$.  By our choice of indices, this means that $\Pi_{-1}$ is the set of $18$-cycles, $\Pi_0$ is the set of elements with cycle structure $(7,11)$, $\Pi_1$ is the set of elements with cycle structure $(1, 7,10)$, $\Pi_3$ is the set of elements with cycle structure $(3,7,8)$, $\Pi_4$ is the set of elements with cycle structure $(4,7,7)$, and $\Pi_5$ is the set of elements with cycle structure $(5,6,7)$.  The set $I$ is $\{-1,0,1,3,4,5\}$.  We will show that $\mathcal{C}_{18}$ is a minimal cover of the elements of $S_{18}$ by showing that $\mathcal{C}_{18}$ is a minimal cover of the elements of $\Pi$.

\begin{lem}
\label{lem:18min}
The collection $\mathcal{C}_{18}$ is a minimal cover of the elements of $\Pi$.  Moreover, $\mathcal{C}_{18}$ is the unique minimal cover of the elements of $\Pi$ using only maximal subgroups. 
\end{lem}

\begin{proof}
We begin by noting the number of elements of $\Pi_i$, $i \in I$, that are in each class of maximal subgroups.  We will start with the subgroups in $\mathcal{M}_i$ for some $i \in I$.  The subgroup isomorphic to $A_{18}$ contains $83147710464000$ elements of $\Pi_0$.  The subgroups isomorphic to $S_{17}$ each contain $5081248972800$ elements of $\Pi_1$.  The subgroups isomorphic to $S_3 \times S_{15}$ each contain $46702656000$ elements of $\Pi_3$.  The subgroups isomorphic to $S_4 \times S_{14}$ each contain $5337446400$ elements of $\Pi_4$.  The subgroups isomorphic to $S_5 \times S_{13}$ each contain $3558297600$ elements of $\Pi_5$.  Finally, the subgroups isomorphic to $S_9 \Wr S_2$ each contain $14631321600$ elements of $\Pi_{-1}$.  

We will now calculate $d(M)$ for the maximal subgroups not contained in $\mathcal{C}$.  Beyond the classes contained in $\mathcal{C}_{18}$, the maximal subgroups of $S_{18}$ are isomorphic to one of the following: $S_2 \times S_{16}$, $S_6 \times S_{12}$, $S_7 \times S_{11}$, $S_8 \times S_{10}$, $S_6 \Wr S_3$, $S_3 \Wr S_6$, $S_2 \Wr S_9$, or $\PGL_2(17)$.

First, since the subgroups isomorphic to $S_2 \times S_{16}$ contain no elements of $\Pi$, $$d(S_2 \times S_{16}) = 0 < 1.$$

The subgroups isomorphic to $S_6 \times S_{12}$ contain only $1642291200$ elements of $\Pi_5$.  This means that, if $M_6$ is a subgroup isomorphic to $S_6 \times S_{12}$ and $M_5$ is a subgroup isomorphic to $S_5 \times S_{13}$, $$d(S_6 \times S_{12}) = \frac{|M_6 \cap \Pi_5|}{|M_5 \cap \Pi_5|} = \frac{1642291200}{3558297600} < 1.$$

The subgroups isomorphic to $S_7 \times S_{11}$ each contain $2612736000$ elements from $\Pi_0$,\\ $2874009600$ elements from $\Pi_1$, $1197504000$ elements from $\Pi_3$, $102643200$ elements from $\Pi_4$, and $958003200$ elements from $\Pi_5$.  This means that \begin{align*}d(S_7 \times S_{11}) = \text{ }&\frac{2612736000}{83147710464000} + \frac{2874009600}{5081248972800} + \frac{1197504000}{46702656000}\\ \text{   }&+ \frac{102643200}{5337446400} + \frac{958003200}{3558297600}\\ < \text{ }&1.\end{align*}

The subgroups isomorphic to $S_8 \times S_{10}$ each contain $2090188800$ elements from $\Pi_1$ and $870912000$ elements from $\Pi_3$.  This means that $$d(S_8 \times S_{10}) = \frac{2090188800}{5081248972800} + \frac{870912000}{46702656000} < 1.$$

The subgroups isomorphic to $S_6 \Wr S_3$, $S_3 \Wr S_6$, $S_2 \Wr S_9$, and $\PGL_2(17)$ only contain elements from $\Pi_{-1}$.  In each case, we have:

\begin{align*}
d(S_6 \Wr S_3) &= \frac{12441600}{14631321600} < 1,\\ 
d(S_3 \Wr S_6) &= \frac{1866240}{14631321600} < 1,\\
d(S_2 \Wr S_9) &= \frac{10321920}{14631321600} < 1,\\
d(\PGL_2(17)) &= \frac{816}{14631321600} < 1.\\
\end{align*}
Therefore, for all maximal subgroups $M$ of $S_{18}$ not contained in $\mathcal{C}$, we have $d(M) < 1$.  By Lemma \ref{lem:keylemma}, $\mathcal{C}$ is the unique minimal cover of $\Pi$ using only maximal subgroups.
\end{proof}

\begin{thm}
\label{thm:18}
The covering number of $S_{18}$ is $36772$, and the unique minimal cover containing only maximal subgroups consists of all subgroups isomorphic to one of $S_9 \Wr S_2$, $A_{18}$, $(S_1 \times) S_{17}$, $S_3 \times S_{15}$, $S_4 \times S_{14}$, or $S_5 \times S_{13}$. 
\end{thm}

\begin{proof}
This follows immediately from Lemmas \ref{lem:18cover} and \ref{lem:18min}. 
\end{proof}

\subsection{The symmetric group \texorpdfstring{\boldmath{$S_{24}$}}{}}

We proceed with $S_{24}$ as we did with $S_{18}$ above.  We define the collection $\mathcal{C}_{24}$ to be the set of all maximal subgroups of $S_{24}$ isomorphic to one of $S_{12} \Wr S_2$, $A_{24}$, or $S_i \times S_{24-i}$, where $1 \le i \le 7$.  

\begin{lem}
\label{lem:24cover}
The collection $\mathcal{C}_{24}$ is a cover of the elements of $S_{24}$.  
\end{lem}

\begin{proof}
We identify $S_{24}$ with its natural action on $\{1,\dots,24\}$ and consider an element $g \in S_{24}$.  If $g$ fixes any elements in $\{1,\dots,24\}$, then $g$ is contained in a subgroup isomorphic to $S_{23}$.  If $g$ is an $24$-cycle, then $g$ is contained in a subgroup isomorphic to $S_{12} \Wr S_2$.  If $g$ has cycle structure $(j, 24 - j)$ for some $1 \le j \le 12$, then $g$ is contained in $A_{24}$.  If the cycle structure of $g$ contains an $i$-cycle, where $2 \le i \le 7$, then $g$ is contained in a subgroup isomorphic to $S_i \times S_{24 - i}$.  Thus any element $g$ not covered by $\mathcal{C}_{24}$ must fix no points of $\{1,\dots,24\}$ and have cycle structure consisting of at least three disjoint cycles whose lengths are all at least $8$.  The only such elements $g$ have cycle structure $(8,8,8)$.  However, these elements are contained in the subgroups isomorphic $S_{12} \Wr S_2$, and, therefore, $\mathcal{C}_{24}$ is a cover of the elements of $S_{24}$.   
\end{proof}

To show that $\mathcal{C}_{24}$ is a minimal cover, we consider the set $\Pi$, which consists of all elements of $S_{24}$ with cycle structure one of $(24)$, $(11,13)$, $(1,10,13)$, $(2,11,11)$, $(3,10,11)$, $(4,9,11)$, $(5,9,10)$, $(6,9,9)$, or $(7,8,9)$.  We note that the elements of $\Pi$ are partitioned among the subgroups in $\mathcal{C}_{24}$.  We index the subgroups of $\mathcal{C}_{24}$ as follows: we let $\mathcal{M}_{-1}$ be the class of subgroups isomorphic to $S_{12} \Wr S_2$; we let $\mathcal{M}_0$ be the class containing the subgroup $A_{24}$; and, for $1 \le i \le 7$, we let $\mathcal{M}_i$ be the class of subgroups isomorphic to $S_i \times S_{24-i}$.  We let $I = \{-1, 0, \dots,7\}$.  By our choice of indices, this means that $\Pi_{-1}$ is the set of elements with cycle structure $(24)$, $\Pi_0$ is the set of elements with cycle structure $(11,13)$, and $\Pi_i$ is the set of elements of $\Pi$ containing an $i$-cycle, where $1 \le i \le 7$.

\begin{lem}
\label{lem:24min}
The collection $\mathcal{C}_{24}$ is a minimal cover of the elements of $\Pi$.  Moreover, $\mathcal{C}_{24}$ is the unique minimal cover of the elements of $\Pi$ using only maximal subgroups.
\end{lem}

\begin{proof}
We will calculate $d(M)$ for each maximal subgroup $M$ not in $\mathcal{C}$.  These maximal subgroups $M$ must be isomorphic to one of the following: $S_j \times S_{24-j}$, where $8 \le j \le 11$; $S_k \Wr S_{n/k}$, where $k = 2,3,4,6,8$; or $\PGL_2(23)$.  We leave out the details of the calculations but present the values of each $d(M)$:

\begin{align*}
d(S_8 \times S_{16}) &= \frac{8}{17} < 1,\\
d(S_9 \times S_{15}) &= \frac{1321}{2584} < 1,\\
d(S_{10} \times S_{14}) &= \frac{256}{245157} < 1,\\
d(S_{11} \times S_{13}) &= \frac{22441}{832048} < 1,\\
d(S_8 \Wr S_3) &= \frac{14}{16335} < 1,\\
d(S_6 \Wr S_4) &= \frac{1}{71148} < 1,\\
d(S_4 \Wr S_6) &= \frac{2}{6670125} < 1,\\
d(S_3 \Wr S_8) &= \frac{243}{1497496060} < 1,\\
d(S_2 \Wr S_{12}) &= \frac{2}{467775} < 1,\\
d(\PGL_2(23)) &= \frac{23}{217275125760000} < 1.\\
\end{align*}
Therefore, by Lemma \ref{lem:keylemma}, $\mathcal{C}_{24}$ is the unique minimal cover of the elements $\Pi$ using only maximal subgroups.
\end{proof}

\begin{thm}
\label{thm:24}
The covering number of $S_{24}$ is $1888233$, and the unique minimal cover containing only maximal subgroups consists of all subgroups isomorphic to one of $S_{12} \Wr S_2$, $A_{24}$, or $S_i \times S_{n-i}$, where $1 \le i \le 7$.. 
\end{thm}

\begin{proof}
This follows immediately from Lemmas \ref{lem:24cover} and \ref{lem:24min}. 
\end{proof}

\section{Large values of n}
\label{sect:large}

In this section, we determine the covering number of $S_n$, where $n \ge 30$ and $n \equiv 0 \pmod 6$.

We define the collection $\mathcal{C}_n$ to consist of all maximal subgroups of $S_n$ isomorphic to one of the following: $S_{n/2} \Wr S_2$, $A_n$, or $S_i \times S_{n-i}$, where $1 \le i \le n/3 - 1$.  We label the classes of maximal subgroups as follows: $\mathcal{M}_{-1}$ is the class of subgroups isomorphic to $S_{n/2} \Wr S_2$; $\mathcal{M}_0$ is the class that contains $A_{n}$; and, for $1 \le i \le n/3 - 1$, $\mathcal{M}_i$ is the class that contains the subgroups isomorphic to $S_i \times S_{n-i}$.  We let $I = \{-1,0,\dots, n/3 - 1\}$. 

\begin{lem}
\label{lem:Cncover}
Let $n \equiv 0 \pmod 6$ and $n \ge 30$.  The collection $\mathcal{C}_n$ is a cover of the elements of $S_n$. 
\end{lem}

\begin{proof}
We identify $S_{n}$ with its natural action on $\{1,\dots,n\}$ and consider an element $g \in S_{n}$.  If $g$ fixes any element in $\{1,\dots,n\}$, then $g$ is contained in a subgroup isomorphic to $S_{n-1}$.  If $g$ is an $n$-cycle, then $g$ preserves a decomposition of $\{1,\dots,n\}$ into two sets of size $n/2$, and hence $g$ is contained in a subgroup isomorphic to $S_{n/2} \Wr S_2$.  If $g$ has cycle structure $(j, n - j)$ for some $1 \le j \le n/2$, then $g$ is contained in $A_{n}$.  If the cycle structure of $g$ contains an $i$-cycle, where $2 \le i \le n/3 - 1$, then $g$ is contained in a subgroup isomorphic to $S_i \times S_{n - i}$.  Thus any element $g$ not covered by $\mathcal{C}_{n}$ must fix no points of $\{1,\dots,n\}$ and have cycle structure consisting of at least three disjoint cycles whose lengths are all at least $n/3$.  The only such elements $g$ have cycle structure $(n/3,n/3,n/3)$.  However, since $n/3$ is even, these elements all stabilize a decomposition of $\{1,\dots,n\}$ into two sets of size $n/2$ and are contained in the subgroups isomorphic $S_{n/2} \Wr S_2$.  Therefore, $\mathcal{C}_{n}$ is a cover of the elements of $S_{n}$. 
\end{proof}

We now define collections $\Pi_i$, $-1 \le i \le n/3 - 1$, as follows:

\begin{align*}
 \Pi_{-1} := \{&g : g \in S_n, g \text{ has cycle structure } (n)\};\\
 \Pi_{0} := \{&g : g \in S_n, g \text{ has cycle structure } (n/2 - 1, n/2 + 1) \text{ if } n/2 \text{ is even }\\
  &\text{or cycle structure } (n/2 -2, n/2+2) \text{ if } n/2 \text{ is odd}\};\\
 \Pi_{1} := \{&g : g \in S_n, g \text{ has cycle structure } (1, n/2 - 2, n/2 + 1)\};\\
 \Pi_{2} := \{&g : g \in S_n, g \text{ has cycle structure } (2, n/2 - 1, n/2 - 1) \text{ if } n/2 \text{ is even}\\
 &\text{or cycle structure } (2, n/2 -4, n/2+2) \text{ if } n/2 \text{ is odd}\};\\
 \Pi_{i} := \{&g : g \in S_n, g \text{ has cycle structure } (i, \lfloor(n-i)/2\rfloor, \lceil (n-i)/2 \rceil)\},\\ &\text { for } 3 \le i \le \frac{n}{3}-1 \text{ odd};\\
 \Pi_{i} := \{&g : g \in S_n, g \text{ has cycle structure } (i, (n-i)/2, (n-i)/2) \text{ if } (n-i)/2 \text{ is odd}\\
 &\text{or cycle structure } (i, (n-i)/2 - 1, (n-i)/2 + 1) \text{ if } (n-i)/2 \text{ is even}\},\\
 &\text{ for } 4 \le i \le \frac{n}{3}-2 \text{ even}.\\
\end{align*}

We let $\Pi = \bigcup\limits_{i \in I} \Pi_i.$  For $1 \le i \le n/3 - 1$, we will denote the cycle structure of elements in $\Pi_i$ by $(i, r_i, s_i)$, where $r_i \le s_i$.

\begin{lem}
\label{lem:partitioned}
Let $n \equiv 0 \pmod 6$ and $n \ge 30$.  For each $i$, $-1 \le i \le n/3 - 1$, the only subgroups in $\mathcal{C}_n$ that contain elements of $\Pi_i$ are in class $\mathcal{M}_i$.  Moreover, the elements of $\Pi$ are partitioned among the subgroups in $\mathcal{C}_n$. 
\end{lem}

\begin{proof}
We identify $S_n$ with its natural action on $\{1, \dots, n\}$.  The elements in $\Pi_{-1}$ are $n$-cycles. The $n$-cycles are odd permutations, since $n$ is even, and so they are not contained in $A_{n}$, the unique subgroup in $\mathcal{M}_0$.  Moreover, the $n$-cycles are transitive on $\{1, \dots, n\}$ and cannot be contained in any subgroup isomorphic to $S_i \times S_{n-i}$, where $1 \le i \le n/3 - 1$.  Each $n$-cycle stabilizes a unique partition of $\{1, \dots, n\}$ into two sets of size $n/2$, and so the $n$-cycles are partitioned among the subgroups in $\mathcal{M}_{-1}$.

The elements in $\Pi_0$ have cycle structure $(n/2 - 1, n/2 + 1)$ if $n/2$ is even or $(n/2 -2, n/2 + 2)$ if $n/2$ is odd.  These elements are even permutations and are contained in $\mathcal{M}_0$.  In either case, these elements have a cycle structure that contains a cycle of odd length and a cycle that is longer than $n/2$.  This means they cannot stabilize a decomposition of $\{1, \dots, n\}$ into two sets of size $n/2$ and cannot be contained in the subgroups isomorphic to $S_{n/2} \Wr S_2$.  Moreover, these elements do not stabilize a decomposition of $\{1, \dots, n\}$ into a set of size $i$ and a set of size $n-i$, where $1 \le i \le n/3 - 1$.  Hence $A_n$, the unique subgroup of $\mathcal{M}_0$, contains all elements in $\Pi_0$ and is the only subgroup of $\mathcal{C}_n$ to contain elements of $\Pi_0$.

Finally, we consider the elements of $\Pi_i$ for $1 \le i \le n/3 - 1$, which have cycle structure $(i, r_i, s_i)$.  These elements stabilize a unique decomposition of $\{1, \dots, n\}$ into a set of size $i$ and a set of size $r_i + s_i = n-i$, and so they are partitioned among the subgroups in $\mathcal{M}_i$.  By construction, at least one of $i$, $r_i$, or $s_i$ is odd, and so the only way that an element with cycle structure $(i,r_i, s_i)$ could stabilize a partition of $\{1, \dots, n\}$ into two sets of size $n/2$ is if either $i + r_i$ or $i + s_i$ is $n/2$, which implies, respectively, that $s_i$ or $r_i$ is $n/2$.  Since $r_i \le s_i$, we can rule out $n/2 = i + s_i = r_i$.  This means that $s_i = n/2$, but $s_i \neq n/2$ by the definition of the $\Pi_i$.  Hence the subgroups isomorphic to $S_{n/2} \Wr S_2$ in $\mathcal{M}_{-1}$ do not contain any elements from $\Pi_i$, $1 \le i \le n/3 - 1$.  Elements with cycle structure $(i,r_i, s_i)$, where $i + r_i + s_i = n$, are odd permutations, and so they are not contained in $A_n$, the unique subgroup in $\mathcal{M}_0$.  Finally, the only subgroups isomorphic to $S_j \times S_{n-j}$ that contain elements with cycle structure $(i,r_i, s_i)$ have $j$ equal to one of $i$, $r_i$, $s_i$, or $i+r_i$.  However, each of $r_i$, $s_i$ are at least $n/3$, so the only subgroups $S_j \times S_{n-j}$ with $j \le n/3 - 1$ containing elements with cycle structure $(i,r_i, s_i)$ are isomorphic to $S_i \times S_{n-i}$.  Therefore, for each $i$, $-1 \le i \le n/3 - 1$, the only subgroups in $\mathcal{C}_n$ that contain elements of $\Pi_i$ are in class $\mathcal{M}_i$, and the elements of $\Pi$ are partitioned among the subgroups in $\mathcal{C}_n$.
\end{proof}

In order to apply Lemma \ref{lem:keylemma}, we now must show that $d(M) < 1$ for all maximal subgroups $M \not\in \mathcal{C}_{n}$. 

\begin{lem}
\label{lem:eltbound}
Let $M_i \in \mathcal{M}_i$, where $-1 \le i \le n/3 - 1$.  If $30 \le n \le 102$, then $|M_i \cap \Pi_i| \ge (n/3 - 2)!\frac{(2n/3 +1)!}{(n/3 + 1)n/3}$.  If $n \ge 108$, then $|M_i \cap \Pi_i| \ge (n/2 - 1)!(n/2)!$.
\end{lem}

\begin{proof}
The alternating group $A_n$ contains $n!/((n/2-1)(n/2+1))$ different elements of $\Pi$ when $n/2$ is even and $n!/((n/2-2)(n/2+2))$ different elements of $\Pi$ when $n/2$ is odd.  The subgroups isomorphic to $S_{n/2} \Wr S_2$ each contain $(n/2 - 1)!(n/2)!$ different $n$-cycles.  The subgroups isomorphic to $S_i \times S_{n-i}$, where $1 \le i \le n/3 - 1$, each contain $(i-1)!{\binom{n-i}{r_i}}(r_i - 1)!(s_i - 1)!$ elements with cycle structure $(i,r_i,s_i)$ if $r_i \neq s_i$ and $\frac{1}{2}(i-1)!{\binom{n-i}{r_i}}(r_i - 1)!^2 $ elements of $\Pi$ if $r_i = s_i$.  For $1 \le i \le n/3 - 1$, $|M_i \cap \Pi_i|$ is at least $(n/3 - 2)!{\binom{2n/3 + 1}{n/3}}(n/3 - 1)!(n/3)!$.  The result follows by comparing the values of $(n/3 - 2)!{\binom{2n/3 + 1}{n/3}}(n/3 - 1)!(n/3)! = (n/3 - 2)!\frac{(2n/3 +1)!}{(n/3 + 1)n/3}$ and $(n/2 - 1)!(n/2)!$ for all $n \equiv 0 \pmod 6$ and $n \ge 30$.
\end{proof}

\begin{lem}
\label{lem:dprim} 
Let $n \equiv 0 \pmod 6$ and $n \ge 30$.  If $M$ is a primitive maximal subgroup of $S_n$ that is not contained in $\mathcal{C}_n$, then $d(M) < 1$.  
\end{lem}

\begin{proof}
Let $M$ be a primitive maximal subgroup of $S_n$ that is not contained in $\mathcal{C}_n$, i.e., a primitive maximal subgroup that is not isomorphic to $A_n$.  By Lemma \ref{lem:primbound}, for all $i \in I$ we know that $$|M \cap \Pi_i| < |M| < 2^n.$$  By Lemma \ref{lem:eltbound}, when $30 \le n \le 102$, for all $i \in I$ and maximal subgroups $M_i \in \mathcal{M}_i$, we have $|M_i \cap \Pi_i| \ge  (n/3 - 2)!\frac{(2n/3 +1)!}{(n/3 + 1)n/3}$.  Hence, when $30 \le n \le 102$, 
\begin{align*}
d(M) &= \sum\limits_{i \in I} \frac{|M \cap \Pi_i|}{|M_i \cap \Pi_i|}\\
&< 2^n \cdot \frac{|I|}{\min\limits_{i \in I} |M_i \cap \Pi_i|}\\
&\le \frac{2^n(n/3 + 1)}{(n/3 - 2)!\frac{(2n/3 +1)!}{(n/3 + 1)n/3}}\\
&< 1.\\
\end{align*}
Similarly, when $n \ge 108$, by Lemma \ref{lem:eltbound}, for all $i \in I$ and maximal subgroups $M_i \in \mathcal{M}_i$, we have $|M_i \cap \Pi_| \ge (n/2 - 1)!(n/2)!$, which implies that
\begin{align*}
d(M) &= \sum\limits_{i \in I} \frac{|M \cap \Pi_i|}{|M_i \cap \Pi_i|}\\
&< 2^n \cdot \frac{|I|}{\min\limits_{i \in I} |M_i \cap \Pi_i|}\\
&\le \frac{2^n(n/3 + 1)}{(n/2 - 1)!(n/2)!}\\
&< 1.\\
\end{align*}
In any case, $d(M) < 1$ for all such maximal subgroups $M$.
\end{proof}

\begin{lem}
\label{lem:dimprim}
Let $n \equiv 0 \pmod 6$ and $n \ge 30$.  If $M$ is a transitive, imprimitive maximal subgroup of $S_n$ that is not contained in $\mathcal{C}_n$, then $d(M) < 1$.
\end{lem}

\begin{proof}
Let $M$ be a transitive, imprimitive maximal subgroup that is not contained in $S_n$, i.e., $M$ is isomorphic to $S_{n/k} \Wr S_{k}$ for some divisor $k$ of $n$, where $k > 2$.  We note first that, for any $i \in I$ and $n \ge 30$, $$|M \cap \Pi_i| < |M| \le 6(n/3)!^3.$$  By Lemma \ref{lem:eltbound}, when $30 \le n \le 102$, for all $i \in I$ and maximal subgroups $M_i \in \mathcal{M}_i$, we have $|M_i \cap \Pi_i| \ge  (n/3 - 2)!\frac{(2n/3 +1)!}{(n/3 + 1)n/3}$.  Hence, when $30 \le n \le 102$, 
\begin{align*}
d(M) &= \sum\limits_{i \in I} \frac{|M \cap \Pi_i|}{|M_i \cap \Pi_i|}\\
&< 6(n/3)!^3 \cdot \frac{|I|}{\min\limits_{i \in I} |M_i \cap \Pi_i|}\\
&\le \frac{6(n/3)!^3(n/3 + 1)}{(n/3 - 2)!\frac{(2n/3 +1)!}{(n/3 + 1)n/3}}\\
&< 1.\\
\end{align*}
Similarly, when $n \ge 108$, by Lemma \ref{lem:eltbound}, for all $i \in I$ and maximal subgroups $M_i \in \mathcal{M}_i$, we have $|M_i \cap \Pi_| \ge (n/2 - 1)!(n/2)!$, which implies that
\begin{align*}
d(M) &= \sum\limits_{i \in I} \frac{|M \cap \Pi_i|}{|M_i \cap \Pi_i|}\\
&< 6(n/3)!^3 \cdot \frac{|I|}{\min\limits_{i \in I} |M_i \cap \Pi_i|}\\
&\le \frac{6(n/3)!^3(n/3 + 1)}{(n/2 - 1)!(n/2)!}\\
&< 1.\\
\end{align*}
In any case, $d(M) < 1$ for all such maximal subgroups $M$. 
\end{proof}

We must now consider the intransitive maximal subgroups of $S_n$ that are not in $\mathcal{C}_n$, i.e., those maximal subgroups isomorphic to $S_j \times S_{n-j}$ for some $n/3 \le j < n/2$.  In order to do this,  we will use the equivalent formula for $d(M)$ from Lemma \ref{lem:altd}.  

\begin{lem}
\label{lem:ijineq}
Let $n \equiv 0 \pmod 6$ and $n \ge 30$.  If a maximal subgroup of $S_n$ isomorphic to $S_j \times S_{n-j}$ contains elements of $\Pi_i$, where $n/3 \le j < n/2$ and $3 \le i \le n/3 - 1$, then $n - 2j - 2 \le i \le n - 2j + 2$.   
\end{lem}

\begin{proof}
We use the notation above and note that the elements in $\Pi_i$ have cycle structure $(i,r_i,s_i)$, where $r_i \le s_i$ and $i + r_i + s_i = n$.  Since $i \ge 3$, by definition, $(n-i)/2 - 1 \le r_i \le s_i \le (n-i)/2 + 1$.  Note that this implies that $s_i - r_i \le 2$.  Identifying $S_n$ with its natural action on $\{1, \dots, n\}$, any element in $S_j \times S_{n-j}$ must stabilize a decomposition of $\{1, \dots, n\}$ into a set of size $j$ and a set of size $n-j$.  Since $s_i - r_i \le 2$ and $n/3 \le j < n - j$, either $j = r_i$ and $n-j = s_i + i$ or $j = s_i$ and $n-j = r_i + i$.  In either case, this implies that $(n-i)/2 - 1 \le j \le (n-i)/2 + 1$.  The result follows. 
\end{proof}

\begin{lem}
\label{lem:epsilonji}
Let $n \equiv 0 \pmod 6$ and $n \ge 30$, and let $\mathcal{M}_j$ be the class of maximal subgroups isomorphic to $S_j \times S_{n-j}$, where $n/3 \le j < n/2$.  If $n/2$ is odd and $j = n/2 - 2$, then there are exactly seven $i \in I$ such that $\epsilon_j(i) = 1$.  If $n/2$ is odd, $n > 30$, and $j = n/2 - 4$, then there are exactly six $i \in I$ such that $\epsilon_j(i) = 1$.  Otherwise, there are at most five $i \in I$ such that $\epsilon_j(i) = 1$.
\end{lem}

\begin{proof}
Let $\mathcal{M}_j$ be the class of maximal subgroups of $S_n$ isomorphic to $S_j \times S_{n-j}$, where $n/3 \le j < n/2$.  Identifying $S_n$ with its natural action on $\{1, \dots, n \}$, we note that the subgroups of $\mathcal{M}_j$ are intransitive on $\{1, \dots, n\}$. Hence no $n$-cycles are contained in these subgroups and $\epsilon_j(-1) = 0$. By Lemma \ref{lem:ijineq}, there are at most five values of $i$, $3 \le i \le n/3 - 1$, for which $\epsilon_j(i) = 1$.  This means that, if there are more than five values of $i$ for which $\epsilon_j(i) = 1$, then $\epsilon_j(k) = 1$ for at least one value of $k$ in $\{0,1,2\}$.  Hence, unless $j = n/2 - 4,$ $n/2 - 2$, or $n/2 - 1$, we have $\epsilon_j(i) = 1$ for at most five values of $i$.

We now consider the remaining cases individually.  For $j = n/2 - 1$, by Lemma \ref{lem:ijineq}, $\epsilon_j(i) = 1$ only if $i \le 4$.  If $j = n/2 - 2$, by Lemma \ref{lem:ijineq}, then $\epsilon_j(i) = 1$ only if $i \le 6$.  In this case, if $n/2$ is even, then $\epsilon_j(i) = 1$ when $i = 1,3,5$; if $n/2$ is odd, then $\epsilon_j(i) = 1$ when $0 \le i \le 6$.  If $j = n/2 - 4$, by Lemma \ref{lem:ijineq}, then $\epsilon_j(i) = 1$ only if $0 \le i \le 2$ or $6 \le i \le 10$.  In this case, if $n/2$ is even, then $\epsilon_j(i) = 1$ when $i = 7,9$; if $n/2$ is odd, then $\epsilon_j(i) = 1$ when $i = 2$ and when $6 \le i \le 10$.  The result follows.
\end{proof}

\begin{lem}
\label{lem:mji}
Let $n \equiv 0 \pmod 6$ and $n \ge 30$, and let $\mathcal{M}_j$ be the class of maximal subgroups isomorphic to $S_j \times S_{n-j}$, where $n/3 \le j < n/2$.  There is at most one value of $i \in I$ for which $m_j(i) > 1$, namely $i = n - 2j$, and $m_j(n-2j) = 2$.
\end{lem}

\begin{proof}
We will identify $S_n$ with its natural action on $\{1, \dots, n\}$. Assume that the subgroups isomorphic to $S_j \times S_{n-j}$ contain elements with cycle structure $(i, r_i, s_i)$, where $r_i \le s_i$ and $i + r_i + s_i = n$.  When $i \le 2$, the only time $m_j(i) > 1$ is when $n/2$ is even, $i = 2$, and $j = n/2 - 1$, in which case $m_{n/2 - 1}(2) = 2$ and $m_{n/2 - 1}(i) \le 1$ for all other values of $i$.  When $i \ge 3$, proceeding as in the proof of Lemma \ref{lem:ijineq}, we see that either $j = r_i$ or $j = s_i$ when $i \ge 3$.  If $r_i \neq s_i$, then an element with cycle structure $(i,r_i,s_i)$ will stabilize a unique decomposition of $\{1, \dots, n \}$ into a set of size $j$ and a set of size $n-j$, which implies that such an element is contained in a unique subgroup in $\mathcal{M}_j$ and $m_j(i) = 1$.  Hence $m_j(i) > 1$ only if $r_i = s_i = j$, in which case $i = n - r_i - s_i = n - 2j$ is unique, and $m_j(n - 2j) = 2$. 
\end{proof}

\begin{lem}
\label{lem:intransspecial}
Let $n \equiv 0 \pmod 6$ and $n \ge 30$.  If $n/2$ is odd, then $d(S_j \times S_{n-j}) < 1$ when $j = n/2 - 2$ and $n/2 - 4$.
\end{lem}

\begin{proof}
For the case $j = n/2 - 2$, by Lemmas \ref{lem:altd}, \ref{lem:epsilonji}, and \ref{lem:mji}, we have:
\begin{align*}
d(S_{n/2 - 2} \times S_{n/2 + 2}) = & \text{ }|S_{n/2 -2} \times S_{n/2 + 2}| \left( \frac{1}{|A_n|} + \frac{1}{|S_{n-1}|} + \frac{1}{|S_2 \times S_{n-2}|}\right. \\ 
 & \left. + \frac{1}{|S_{3} \times S_{n-3}|} + \frac{2}{|S_4 \times S_{n-4}|} + \frac{1}{|S_5 \times S_{n-5}|} + \frac{1}{|S_6 \times S_{n-6}|} \right)\\
= &\text{ } (n/2 - 2)! (n/2+2)! \left( \frac{2}{n!} + \frac{1}{(n-1)!} + \frac{1}{2(n-2)!} \right. \\
&\left. + \frac{1}{6(n-3)!} + \frac{2}{24(n-4)!} + \frac{1}{120(n-5)!} + \frac{1}{720(n-6)!} \right)\\
<& 1,
\end{align*}
when $n \ge 30$.  

Now, consider $j = n/2 - 4$.  Assuming that $10 \le n/3 - 1$, $n \equiv 0 \pmod 6$, and $n/2$ is odd, we have $n \ge 42$.  By Lemmas \ref{lem:altd}, \ref{lem:epsilonji}, and \ref{lem:mji}, we have:
\begin{align*}
d(S_{n/2 - 4} \times S_{n/2 + 4}) = &\text{ } |S_{n/2 -4} \times S_{n/2 + 4}| \left( \frac{1}{|S_2 \times S_{n-2}|} + \frac{1}{|S_{6} \times S_{n-6}|} \right. \\
&\left. + \frac{1}{|S_7 \times S_{n-7}|} + \frac{2}{|S_8 \times S_{n-8}|} + \frac{1}{|S_9 \times S_{n-9}|} + \frac{1}{|S_{10} \times S_{n-10}|} \right)\\
= &\text{ } (n/2 - 4)! (n/2+4)! \left( \frac{1}{2(n-2)!} + \frac{1}{6!(n-6)!} \right. \\
&\left. + \frac{1}{7!(n-7)!} + \frac{2}{8!(n-8)!} + \frac{1}{9!(n-9)!} +  \frac{1}{10!(n-10)!}\right)\\
<& 1.
\end{align*}
\end{proof}

\begin{lem}
\label{lem:dintrans}
 Let $n \equiv 0 \pmod 6$ and $n \ge 30$.  If $n/3 \le j < n/2$, then $d(S_j \times S_{n-j}) < 1$.
\end{lem}

\begin{proof}
We denote by $\mathcal{M}_j$ the class of maximal subgroups of $S_n$ isomorphic to $S_j \times S_{n-j}$.  By Lemmas \ref{lem:epsilonji} and \ref{lem:intransspecial}, $d(S_j \times S_{n-j}) < 1$ whenever there are more than five values of $i \in I$ such that $\epsilon_j(i) = 1$.  Hence we may assume that $\epsilon_j(i) = 1$ for at most five values of $i$.  Moreover, by Lemma \ref{lem:mji}, there is at most one value of $i$ for which $m_j(i) > 1$, and, if $m_j(i) > 1$, then $m_j(i) = 2$.  Thus we may assume that $$\sum\limits_{i \in I} \epsilon_j(i)m_j(i) \le 6.$$  Letting $M_j$ denote a subgroup isomorphic to $S_j \times S_{n-j}$, using Lemma \ref{lem:altd} and noting that $\epsilon_j(-1) = 0$, we have: 
\begin{align*}
d(S_j \times S_{n-j}) &= |M_j| \sum\limits_{i \in I} \frac{\epsilon_j(i)m_j(i)}{|M_i|}\\
&= |M_j| \sum\limits_{i \in I, i \neq -1} \frac{\epsilon_j(i)m_j(i)}{|M_i|}\\
&\le \frac{|M_j|}{\min\limits_{i \in I, i \neq -1}|M_i|} \sum\limits_{i \in I, i \neq -1} \epsilon_j(i)m_j(i)\\
&\le \frac{6|M_j|}{\min\limits_{i \in I, i \neq -1}|M_i|}.
\end{align*}
Note that $\min\limits_{i \in I, i \neq -1}|M_i| = |S_{n/3} \times S_{2n/3 + 1}| = (n/3 -1)! (2n/3 + 1)!$.  Furthermore, when $n \ge 30$,
\begin{align*}
d(S_{n/3 + 2} \times S_{2n/3 - 2}) &\le \frac{6(n/3+2)!(2n/3 - 2)!}{(n/3 - 1)!(2n/3 + 1)!}\\
&= \frac{6(n/3 + 2)(n/3 + 1)(n/3)}{(2n/3  +1)(2n/3)(2n/3 - 1)}\\
&= \frac{3n^2 + 27n + 54}{4n^2 - 9}\\
&< 1.\\
\end{align*}
If $n/3 < j_1 < j_2 < n/2$, then $|S_{j_1} \times S_{n-j_1}| > |S_{j_2} \times S_{n -j_2}|$ and $d(S_{j_2} \times S_{n- j_2}) < d(S_{j_1} \times S_{n - j_1})$.  Thus $d(S_j \times S_{n-j}) < 1$ when $n \ge 30$ and $j \ge n/3 + 2$, and we need only check $j = n/3$ and $j = n/3 + 1$.

We consider first the case $j = n/3$.  By the definition of the permutations in $\Pi_i$ and Lemma \ref{lem:ijineq}, $\epsilon_{n/3}(i) = 1$ only when $i = n/3 - 1$, and so we have:
\begin{align*}
d(S_{n/3} \times S_{2n/3}) &= \frac{m_{n/3}(n/3 - 1)|S_{n/3} \times S_{2n/3}|}{|S_{n/3 -1} \times S_{2n/3 + 1}|}\\ 
&= \frac{|S_{n/3} \times S_{2n/3}|}{|S_{n/3 -1} \times S_{2n/3 + 1}|}\\ 
&= \frac{(n/3)!(2n/3)!}{(n/3 - 1)!(2n/3+1)}\\
&= \frac{n/3}{2n/3 + 1}\\ 
&< 1.\\
\end{align*}

Finally, we consider the case $j = n/3 + 1$.  By the definition of the permutations in $\Pi_i$ and Lemma \ref{lem:ijineq}, $\epsilon_{n/3 + 1}(i) = 1$ only when $n/3 - 4 \le i \le n/3 - 1$, and so we have:

\begin{align*}
d(S_{n/3 + 1} \times S_{2n/3 -1}) = \text{ }&|S_{n/3 + 1} \times S_{2n/3 - 1}|\left( \frac{1}{|S_{n/3-4} \times S_{2n/3 + 4}|} \right.\\ 
& \left. + \frac{1}{|S_{n/3-3} \times S_{2n/3 + 3}|} + \frac{2}{|S_{n/3-2} \times S_{2n/3 + 2}|} + \frac{1}{|S_{n/3-1} \times S_{2n/3 + 1}|} \right)\\ 
= \text{ }&(n/3+1)!(2n/3-1)!\left( \frac{1}{(n/3-4)!(2n/3+4)!}  \right. \\
& \left. +  \frac{1}{(n/3-3)!(2n/3+3)!} +\frac{2}{(n/3-2)!(2n/3+2)!}\right. \\
& \left. +\frac{1}{(n/3-1)!(2n/3+1)!} \right) \\
<  \text{ } & (1/2)^5 + (1/2)^4 + 2(1/2)^3 + 1/2 \\
<  \text{ } & 1.
\end{align*}

Therefore, if $n/3 \le j < n/2$, then $d(S_j \times S_{n-j}) < 1$.
\end{proof}

\begin{thm}
\label{thm:largen}
Let $n \equiv 0 \pmod 6$ and $n \ge 30$.  The covering number of $S_n$ is $\frac{1}{2} {\binom{n}{n/2}} + \sum\limits_{i=0}^{n/3 - 1} {\binom{n}{i}}.$  Moreover, the collection $\mathcal{C}_n$ is the unique minimal cover of the elements of $S_n$ using only maximal subgroups. 
\end{thm}

\begin{proof}
By Lemma \ref{lem:Cncover}, the collection $\mathcal{C}_n$ is a cover of the elements of $S_n$.  By Lemmas \ref{lem:dprim}, \ref{lem:dimprim}, and \ref{lem:dintrans}, $d(M) < 1$ for all maximal subgroups $M$ not in $\mathcal{C}_n$.  By Lemma \ref{lem:keylemma}, $\mathcal{C}_n$ is the unique minimal cover of the collection $\Pi$ of elements of $S_n$ that uses only maximal subgroups.  The result follows. 
\end{proof}

\begin{proof}[Proof of Theorem \ref{thm:6n}]
This follows immediately from Theorems \ref{thm:18}, \ref{thm:24}, and \ref{thm:largen}. 
\end{proof}

\noindent\textsc{Acknowledgements.}  The author would like to thank Luise-Charlotte Kappe for many interesting conversations on this topic as well as feedback on earlier drafts of this manuscript as well as the referees, whose detailed comments about errors in an earlier version made this version possible.  This work was initially started when the author was employed at the University of Western Australia, and the author acknowledges the support of the Australian Research Council Discovery
Grant DP120101336 during his time there.

\bibliographystyle{plain}
\bibliography{Snbib}

\end{document}